\providecommand{\U}[1]{\protect\rule{.1in}{.1in}}
\newtheorem{theorem}{Theorem}
\newtheorem{proposition}[theorem]{Proposition}
\newtheorem{lemma}{Lemma}
\newtheorem{conjecture}{Conjecture}
\begin{document}
\title{The HRT conjecture for a special configuration}
\date{\today}
\author{Kasso A.~Okoudjou}
\address{Department of Mathematics, Tufts University, Medford MA 02131, USA}
\email{kasso.okoudjou@tufts.edu}
\author{Vignon Oussa}
\address{Department of Mathematics, Bridgewater State University, Bridgewater, MA 0235, USA}
\email{vignon.oussa@bridgew.edu}

\begin{abstract}
The following work explores a subcase of the Heil-Ramanathan-Topiwala (HRT) conjecture, which proposes that a set of any finite time-frequency shifts of a nonzero square-integrable function is linearly independent. We identify and discuss certain sufficient conditions, focusing on the rational dimension of a particular vector space and the size of the zero set of the Zak transform under which the conjecture remains valid. A notable implication of our main result is the successful resolution of a case of the HRT Subconjecture, originally proposed by Chris Heil \cite{Heil2006}.

\end{abstract}
\subjclass[2000]{Primary 42C15 Secondary 42C40}
\keywords{HRT conjecture, Ergodic maps, Zak transform}
\maketitle

\section{Introduction}

For a nonzero function $f\in L^{2}\left(  \mathbb{R}\right)  ,$ a
time-frequency shift of $f$ is a function of the type
\[
M_{y}T_{x}f\left(  t\right)  =e^{-2\pi iyt}f\left(  t-x\right)  \text{
\ \ }\left(  x,y\right)  \in\mathbb{R}^{2}.
\]
The operator $M_{y}$ is called a modulation or frequency-shift operator, and
$T_{x}$ is an operator that acts by shifting $f$ in time domain. Generally,
modulation and translation operators do not commute, and in fact, these two
families of unitary operators generate a non-commutative group called the
Heisenberg group. The ubiquitous nature of the Heisenberg group and its
relevance across numerous subjects within harmonic analysis is a fact that has
been extensively documented in the literature \cite{bams/1183547543}. For
instance, the Heisenberg group is fundamental to the foundation of
time-frequency or Gabor analysis and is a source of important examples in
frame theory \cite{Groc2001, heil2010basis}. For a historical development, as
well as an in-depth treatment of Gabor analysis and connection with topics
such as frame theory, we refer the reader to \cite{FeiStr1, FeiStr2,
Groc2001,heil2007history}.

We recall that given a countable subset $\Lambda$ of $\mathbb{R}^{2},$ the
collection of functions
\[
\mathcal{G}\left(  f,\Lambda\right)  =\left\{  M_{y}T_{x}f:\left(  x,y\right)
\in\Lambda\right\}
\]
is called a Gabor system. For comparison, the system obtained by substituting
modulation for dilation operators is often called a wavelet system. The
underlying groups for these systems the Heisenberg group and the affine group
share many striking similarities, as documented in
\cite{OUSSA20191718,OUSSA20181202}. However, there are also several instances
in which these systems behave in drastically different ways. For example, it
is well-known that there exist finite systems produced by time-dilation
operators which are linearly dependent. This property guarantees the existence
of a scaling function as a solution to a suitable functional equation--a
central ingredient in the construction of multiresolution orthonormal wavelets
\cite{heil2010basis}. However, as of current date, the extent to which an
analogous statement holds for finite systems of time-frequency shifts remains
unknown. In fact, most progress made in this direction suggests that, any
finite system of time-frequency shifted copies of a nonzero function is
linearly independent. This conjecture, initially posited by Heil, Ramanathan,
and Topiwala, has come to be known as the HRT Conjecture \cite{HRT96}. It can
be stated as follows:

\begin{conjecture}
\cite{HRT96} (HRT Conjecture) Given a fixed finite set $\Lambda\subset
\mathbb{R}^{2} $, and a nonzero function $f \in L^{2}(\mathbb{R}) $, the
associated finite Gabor system, $\mathcal{G}(f, \Lambda) $ is linearly independent.
\end{conjecture}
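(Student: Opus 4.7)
The plan is to proceed by strong induction on $N = |\Lambda|$, using metaplectic invariance to normalize $\Lambda$ and then splitting into cases according to the algebraic structure of the subgroup it generates. The cases $N \leq 3$ are classical, so I assume $N \geq 4$ and that the conjecture holds for every $\Lambda'$ with $|\Lambda'| < N$. Suppose, toward a contradiction, that $\sum_{k=1}^{N} c_k M_{y_k} T_{x_k} f = 0$ with all $c_k \neq 0$ and $f \in L^{2}(\mathbb{R}) \setminus \{0\}$.

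First, I would exploit the fact that for any $A \in \mathrm{Sp}(2,\mathbb{R})$ there is a metaplectic operator $\mu(A)$ for which $\mu(A) M_y T_x \mu(A)^{-1}$ is a unimodular multiple of $M_{\tilde{y}} T_{\tilde{x}}$, where $(\tilde{x},\tilde{y}) = A(x,y)$. This transports any linear dependence of $\mathcal{G}(f,\Lambda)$ into a linear dependence of $\mathcal{G}(\mu(A)f, A\Lambda)$. I would use this freedom to send two chosen points of $\Lambda$ to $(0,0)$ and $(1,0)$ and to put the remaining points into as simple a form as possible. Having normalized, I would then consider the subgroup $H \leq \mathbb{R}^{2}$ generated by $\Lambda-\Lambda$ and split into two cases.

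If $H$ is discrete, then after a further symplectic change of variables $\Lambda$ lies in a rational sublattice of $\mathbb{Z}^{2}$; in that case, Linnell's theorem on the zero-divisor conjecture for the discrete Heisenberg group (equivalently, a direct Zak transform calculation) gives a contradiction, since the purported relation becomes the action of a nonzero element of a group von Neumann algebra on a cyclic vector. If instead $H$ is dense in $\mathbb{R}^{2}$, then I would pick a direction $\omega \in H$ whose components are irrationally related, so that translation by $\omega$ acts ergodically on the torus $\mathbb{R}^{2}/\mathbb{Z}^{2}$. The plan is then to shift the assumed dependence relation by successive multiples of $\omega$, pass to Ces\`aro-type averages in the Zak representation, and invoke the Birkhoff ergodic theorem in the spirit of the two special configurations treated later in the paper, extracting a new nontrivial dependence supported on a proper subset $\Lambda' \subsetneq \Lambda$, which contradicts the inductive hypothesis.

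The hard part, and the reason the HRT conjecture remains open in full generality, is precisely this last step. Ergodic averaging along a single direction $\omega$ annihilates only the coefficients attached to points $(x_k, y_k)$ whose symplectic pairing with $\omega$ is irrational, so it kills several coefficients simultaneously only in constrained geometric situations. In the paper's special configurations a single $\omega$ can be chosen that eliminates all but one term, but for an arbitrary $\Lambda$ of size at least four no single direction suffices, and naively iterating the procedure reintroduces terms one has just killed. Overcoming this obstruction would require either a multi-direction ergodic scheme that successively eliminates points while preserving earlier cancellations, or a genuinely new rigidity principle for finite Gabor systems; this is where I would expect to concentrate the real work.
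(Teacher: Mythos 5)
You are attempting to prove a statement that the paper itself presents only as a conjecture: the HRT Conjecture is open, the paper offers no proof of it, and it only establishes the two special families of configurations in Theorem~2 and Theorem~4. So there is nothing in the paper to compare your argument against, and your proposal must be judged on its own terms --- where it contains a genuine and decisive gap, one you candidly acknowledge in your final paragraph. The discrete case is fine (it is exactly Linnell's theorem, which the paper also cites as the state of the art), and the metaplectic normalization is standard. Everything therefore hinges on the dense case, and there the key step --- ``pass to Ces\`aro-type averages \dots extracting a new nontrivial dependence supported on a proper subset $\Lambda' \subsetneq \Lambda$'' --- is not an argument but a wish. As you yourself observe, averaging along a single direction $\omega$ can only suppress the terms whose symplectic pairing with $\omega$ is irrational, no single $\omega$ handles an arbitrary four-point set, and iterating the procedure in a second direction destroys the cancellations achieved in the first. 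Until that obstruction is overcome, no contradiction with the inductive hypothesis is ever produced, and the induction does not close.

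A secondary point: your description of how the paper's special cases work is not accurate, so the proposed induction cannot be viewed as a routine extension of the paper's technique. In Sections~2 and~3 the authors never extract a smaller dependence relation. Instead they diagonalize the left-hand side (via the Zak transform or the Fourier transform), iterate the resulting functional equation to obtain $|F(z+n\gamma)|=\bigl|\prod_{j=0}^{n-1}p(z+j\gamma)\bigr|\,|F(z)|$, and then use Poincar\'e recurrence and ergodicity of the relevant rotation to force either $|p|\equiv 1$ on a set of positive measure (impossible for a nonconstant real-analytic $|p|^{2}$) or the simultaneous vanishing of the forward and backward infinite products (incompatible with $0<\epsilon<1$). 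The contradiction is with rigidity of the trigonometric polynomial $p$ or with square-integrability of $F$, not with an inductive hypothesis on fewer points. Your outline is a reasonable research program, but the step you flag as ``the real work'' is precisely the open content of the conjecture, so the proposal is not a proof.
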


For the specific case where all the points lie on the same line (which is
equivalent to stating that all modulation parameters are identical or all the
translation parameters are identical), a straightforward application of the
Fourier transform verifies the conjecture. Generally, however, the problem is
much more challenging than one might initially anticipate. As of the current
date, the most general result is attributed to Linnell \cite{MR1637388}, who
was able to corroborate the conjecture for the case where $\Lambda$ is a
shifted copy of a finite subset of a discrete subgroup of $\mathbb{R}^{2d}, d
\in\mathbb{N} $.

In terms of the number of points, the smallest case not addressed by Linnell's
result can only occur for specific configurations of a set $\Lambda$
containing four points. To date, the status of the HRT conjecture for sets of
four points can be summarized as follows.

\begin{proposition}
\label{summary4point} Let $\Lambda\subset\mathbb{R}^{2} $ be such that $\#
\Lambda= 4 $, and let $0 \neq f \in L^{2}(\mathbb{R}) $. The finite Gabor
system $\mathcal{G}(f, \Lambda) $ is linearly independent in each of the
following cases:

\begin{enumerate}
\item $\Lambda$ is a $(2,2)- $ configuration, that is, two of the points are
on a line and the other two on a parallel line, and $0 \neq f \in
L^{2}(\mathbb{R}) $ is arbitrary, \cite{Dem10, DemZah12};

\item $\Lambda$ is a $(1,3)- $ configuration, that is, three of the points are
on a line and the fourth point is off that line, and $0 \neq f \in\mathcal{S}
(\mathbb{R}) $ \cite{Dem10};

\item $\Lambda$ is an arbitrary set of four points, but extra restrictions are
imposed on $f $, e.g., \cite{BeBo13, BowSpee13, kao19}.
\end{enumerate}
\end{proposition}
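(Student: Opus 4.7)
The plan is to observe that Proposition~\ref{summary4point} is a compilation of results already established in the cited literature, so a proof consists of a uniform normalization step followed by a direct appeal to each reference. The unifying principle is that linear independence of $\mathcal{G}(f,\Lambda)$ is invariant under the affine metaplectic action on phase space: if $U$ is the metaplectic operator associated to $A\in \mathrm{Sp}(2,\R)$, then $\mathcal{G}(f,\Lambda)$ is linearly independent if and only if $\mathcal{G}(Uf, A\Lambda+w)$ is, for every $w\in\R^{2}$. Combined with the invariance under phase-space translations, this reduces each configuration class to a canonical representative.

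For case (1), any $(2,2)$--configuration can be mapped under this action to the form $\{(0,0),(1,0),(r,1),(r+s,1)\}$ for some $r\in\R$ and $s>0$. If the four points happen to lie in a lattice shift, Linnell's theorem \cite{MR1637388} applies immediately. The remaining irrational regime is precisely the one handled by Demeter and Demeter--Zaharescu \cite{Dem10, DemZah12}: a single assumed dependence relation is iterated via suitable translates, and the resulting infinite cascade of relations forces $f\equiv 0$. I would simply invoke their theorem for the canonical configuration.

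For case (2), any $(1,3)$--configuration reduces under the same normalization to $\{(0,0),(1,0),(\alpha,0),(a,1)\}$ with $\alpha,a\in\R$. Rational $\alpha$ again falls under Linnell. For irrational $\alpha$ and $f\in\S(\R)$, I would appeal to Demeter \cite{Dem10}: iterating the hypothesized dependence under $T_{1}$, $T_{\alpha}$ and $M_{1}$, exploiting the rapid decay of $f$ and $\hat{f}$, and using the equidistribution of $\{n+m\alpha\bmod 1\}$, one extracts a contradiction.

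For case (3), each restriction on $f$ supports a tailored argument: positivity or real-valuedness prevents cancellation in Fourier-analytic identities; compact support or bandlimitation enables analytic-continuation arguments; ultradifferentiability provides the requisite quantitative derivative estimates. I would simply quote the respective theorems of \cite{BeBo13, BowSpee13, kao19} after normalization. The principal obstacle in reproving any of these statements without such aids is precisely the irrational regime in (1) and (2), where either delicate decay input or ergodic-theoretic arguments become indispensable; this difficulty is exactly the motivation for the new approach developed in the body of the present paper.
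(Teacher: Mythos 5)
Your proposal is correct and matches the paper's treatment: Proposition~\ref{summary4point} is presented there purely as a summary of known results, justified by citation to \cite{Dem10, DemZah12, BeBo13, BowSpee13, kao19} together with the standard symplectic/metaplectic normalization of configurations that the paper records in Remark~\ref{finalremark}(a). Your added sketches of what each cited argument does are accurate but not required, since the paper itself offers no proof beyond the references.
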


Nevertheless, the general problem posed by the HRT conjecture for sets of four
points remains unresolved. The primary objective of this paper is to introduce
a new result in this direction.  To state the first result, we introduce the
Wiener amalgam space $W(\mathbb{R})$ and refer the reader to \cite[Section
6.1]{Groc2001} for further details.

A measurable function $f\in W(\mathbb{R})$ is said to belong to the Wiener
amalgam space if
\[
\left\Vert f\right\Vert _{W(\mathbb{R})}=\sum_{n\in\mathbb{Z}}\text{esssup}%
_{x\in\lbrack0,1]}\left\vert f(x+n)\right\vert <\infty.
\]
Denoting $C(\mathbb{R})$ as the space of continuous functions on $\mathbb{R}$,
we define $W_{0}(\mathbb{R})=C(\mathbb{R})\cap W(\mathbb{R})$ as the space of
continuous functions within the Wiener amalgam space. If $S(\mathbb{R})$
denotes the class of Schwartz functions, it can be proved that
\[
S(\mathbb{R})\subset W_{0}(\mathbb{R})\subset L^{1}(\mathbb{R})\cap
L^{2}(\mathbb{R}).
\]
The main aim of the following work is to establish the following:

\begin{theorem}
\label{thm:amalgampertub}
Consider a finite set $\{(x_{k}, y_{k}) : 1 \leq k \leq N\} \subset \mathbb{Z}^{2}$, and let $(\alpha, \beta)$ be a pair in $\mathbb{R}^{2}$ but not in $\mathbb{Z}^{2}$. Assume that a function $f$ either belongs to $W_{0}(\mathbb{R})$ or is a Schwartz function, and it satisfies the equation
\begin{equation}
\sum_{k=1}^{N}c_{k}M_{y_{k}}T_{x_{k}}f = M_{\alpha}T_{\beta}f
\end{equation}
where $c_{1}, \ldots, c_{N}$ are nonzero complex coefficients. Under these conditions, the following statements hold:

\begin{enumerate}
\item Let \(\alpha_{(-\beta,\alpha)}: [0,1)^2 \rightarrow [0,1)^2\) be the bijective map given by \(\alpha_{(-\beta,\alpha)}(z) = (z + (-\beta, \alpha)) \mod \mathbb{Z}^2\). Then the zero set of the Zak transform of \(f\) is necessarily \(\alpha_{(-\beta,\alpha)}\)-invariant.

    \item If the dimension of the vector space over the rationals $\mathbb{Q}$, spanned by $1$, $\alpha$, and $\beta$, is 2 (i.e., $\dim_{\mathbb{Q}}(\mathbb{Q} + \mathbb{Q}\alpha + \mathbb{Q}\beta) = 2$), then the zero set of the Zak transform of $f$ cannot be finite.
    
    \item If the dimension of this space is 3 (i.e., $\dim_{\mathbb{Q}}(\mathbb{Q} + \mathbb{Q}\alpha + \mathbb{Q}\beta) = 3$), then it necessarily follows that $f$ must be the zero function.
\end{enumerate}
\end{theorem}

One important aspect of Theorem \ref{thm:amalgampertub} is that it helps solve a particular case of the HRT Subconjecture, which Chris Heil described in \cite[Subconjecture 9.2]{Heil2006}. 

\section{Proof of Theorem~\ref{thm:amalgampertub}}

\label{sec2} To prove Theorem~\ref{thm:amalgampertub}, we proceed by
contradiction and assume that there exists a nonzero function $f\in
W_{0}(\mathbb{R})$ such that
\begin{equation}
\sum_{k=1}^{N}c_{k}M_{y_{k}}T_{x_{k}}f=M_{\alpha}T_{\beta}%
f\label{time_frequency}%
\end{equation}
for some nonzero complex coefficients $c_{1},\ldots,c_{N}$. Note that any
finite set of time-frequency operators parametrized by a collection of finite
points on an integer lattice in $\mathbb{R}^{2}$ forms a collection of
pairwise commuting operators. As such, the Spectral Theorem \cite{MR3112817}
guarantees the existence of a unitary operator that diagonalizes the operator
\begin{equation}
J=\sum_{k=1}^{N}c_{k}M_{y_{k}}T_{x_{k}}.\label{J}%
\end{equation}
In our case, this unitary operator is the Zak transform $Z:L^{2}%
(\mathbb{R})\rightarrow L^{2}([0,1]^{2})$, defined formally by
\begin{equation}
Zf(t,\omega)=\sum_{k\in\mathbb{Z}}f(t+k)e^{-2\pi k\omega i}%
\label{Zak_Transform}%
\end{equation}
The following section contains a summary of the main properties of the Zak
transform, and we refer the reader to \cite{Groc2001} for a complete
introduction to this transform.

\begin{lemma}
\label{prop:ZacT} For $\varphi\in L^{2}(\mathbb{R}) $ and $(t, \omega)
\in[0,1)^{2} $, we have:

\begin{itemize}
\item $[ZT\varphi](t, \omega) = e^{-2\pi\omega i} [Z\varphi](t, \omega) $.

\item $[ZM\varphi](t, \omega) = e^{-2\pi it} [Z\varphi](t, \omega) $.

\item For $\alpha, \beta> 0 $, $[ZM_{\alpha}T_{\beta}\varphi](t, \omega) =
e^{-2\pi i \alpha t} [Z\varphi](t - \beta, \omega+ \alpha) $.

\item For any integer $j$, $[ZT\varphi](t+j,\omega)=e^{2\pi\omega ji}%
[Z\varphi](t,\omega)$.

\item For any integer $j$, $[ZT\varphi](t,\omega+j)=[Z\varphi](t,\omega)$.

\item If $\varphi\in W_{0}(\mathbb{R}) $ or $\varphi\in S(\mathbb{R}) $, then
$Z\varphi$ is continuous on $\mathbb{R}^{2} $; see \cite[Lemma 8.2.1; Theorem
8.2.5]{Groc2001}.

\item If $\varphi$ is such that $Z\varphi$ is continuous on $\mathbb{R}^{2}$,
then $Z\varphi$ has a zero in $[0,1]^{2}$; see \cite[Lemma 8.4.2]{Groc2001}.
\end{itemize}
\end{lemma}

Applying the Zak transform to the equation $Jf=M_{\alpha}T_{\beta}f$ (see
(\ref{J}) and (\ref{Zak_Transform})) and letting $F=Zf$, for almost every
$(t,\omega)\in\lbrack0,1]^{2}$, we derive:
\begin{equation}
\sum_{k=1}^{N}c_{k}e^{-2\pi iy_{k}t}e^{-2\pi ix_{k}\omega}\cdot F(t,\omega
)=e^{-2\pi i\alpha t}\cdot F(t-\beta,\omega+\alpha),\label{line_3}%
\end{equation}
or equivalently,
\begin{equation}
p(t,\omega)\cdot F(t,\omega)=e^{-2\pi i\alpha t}\cdot F(t-\beta,\omega
+\alpha),\label{line_0}%
\end{equation}
where $p:\mathbb{R}^{2}\rightarrow\mathbb{C}$ is the non-zero trigonometric
polynomial given by
\begin{equation}
p(t,\omega)=\sum_{k=1}^{N}c_{k}e^{-2\pi iy_{k}t}e^{-2\pi ix_{k}\omega}%
=\sum_{k=1}^{N}c_{k}e^{-2\pi i\langle(t,\omega),(y_{k},x_{k})\rangle
}.\label{polynomial}%
\end{equation}
It follows that for almost every $(t,\omega)\in\lbrack0,1]^{2}$,
\[
\left\vert F((t,\omega)+\gamma)\right\vert =\left\vert p(t,\omega)\right\vert
\cdot\left\vert F(t,\omega)\right\vert
\]
where, for simplicity in notation, we set $\gamma=(-\beta,\alpha)$ and
$z=(t,\omega)$. Next, by iterating Equation \eqref{line_0}, we see that for
almost every $z\in\lbrack0,1]^{2}$ and for any positive integer $n\geq1$,
\begin{equation}
\left\vert F(z+n\gamma)\right\vert =\left\vert \prod_{j=0}^{n-1}%
p(z+j\gamma)\right\vert \cdot\left\vert F(z)\right\vert .\label{condition}%
\end{equation}
Using Lemma~\ref{prop:ZacT} and the assumption that $F=Zf$ is continuous on
$\mathbb{R}^{2}$, we conclude that there exists $\lambda\in\lbrack0,1]^{2}$
such that $F(\lambda)=Zf(\lambda)=0$. Then for each $n\geq1$, we have
\[
\left\vert F(\lambda+n\gamma)\right\vert =\left\vert \prod_{j=0}%
^{n-1}p(\lambda+j\gamma)\right\vert \cdot\left\vert F(\lambda)\right\vert =0.
\]
Let $\mathrm{zero}\left(  F\right)  $ be the zero set of the Zak transform of
$f.$ Next, let $\alpha_{\gamma}:\left[  0,1\right)  ^{2}\rightarrow\left[
0,1\right)  ^{2}$ be the bijective map given by $\alpha_{\gamma}\left(
z\right)  =\left(  z+\gamma\right)  \operatorname{mod}\mathbb{Z}^{2}.$ By
virtue of the fact that
\[
\left\vert F(\alpha_{\gamma}\left(  z\right)  )\right\vert =\left\vert
p(z)\right\vert \cdot\left\vert F(z)\right\vert
\]
it follows that $\mathrm{zero}\left(  F\right)  $ is $\alpha_{\gamma}%
$-invariant and consequently
\[
F\left(
{\displaystyle\bigcup\limits_{n\in\mathbb{N}}}
\alpha_{\gamma}^{n}\left(  \mathrm{zero}\left(  F\right)  \right)  \right)
=\left\{  0\right\}  .
\]
In light of this, the following is immediate:

\begin{itemize}
\item Suppose that $\dim_{\mathbb{Q}}(\mathbb{Q}+\mathbb{Q}\alpha
+\mathbb{Q}\beta)=2.$ By assumption, $\mathrm{zero}\left(  F\right)  $ is
non-empty and consequently, necessarily not finite. 

\item Suppose that $\dim_{\mathbb{Q}}(\mathbb{Q}+\mathbb{Q}\alpha
+\mathbb{Q}\beta)=3.$ By assumption, $\mathrm{zero}\left(  F\right)  $ is
non-empty and consequently necessarily dense in $\left[  0,1\right)  ^{2}.$
Since $F$ is continuous and vanishing on a set dense subset of $\left[
0,1\right)  ^{2}$ then must be everywhere vanishing. Next, appealing to the
fact that the Zak transform is unitary, it follows that $f$ must be the zero
vector in $L^{2}\left(  \mathbb{R}\right)  .$
\end{itemize}

\section*{Acknowledgment}

K.A.O. was partially supported by grants from the National Science Foundation
under grant numbers DMS 1814253 and DMS 2205771.
V.O. was partially supported by a grant from the National Science Foundation
under grant number DMS 2205852.
We extend our gratitude to Chris Heil and Shahaf Nitzan for identifying an
error in the previous version of our initial results. This led us to revise
and present our first result as articulated in Theorem~\ref{thm:amalgampertub}.

\bibliographystyle{amsplain}
\bibliography{1nconf_B}

\end{document}